\colorlet{refkey}{orange!20}
\colorlet{labelkey}{blue!30}
\newtheorem{theorem}{Theorem}[section]
\newtheorem{conjecture}[theorem]{Conjecture}
\newtheorem*{question*}{Question}
\newtheorem*{theorem*}{Theorem}
\theoremstyle{definition}
\newtheorem{definition}[theorem]{Definition}
\newtheorem{question}[theorem]{Question}
\newtheorem*{definition*}{Definition}
\theoremstyle{remark}
\newcommand{\ang}[1]{\left\langle #1 \right\rangle}
\newcommand{\paren}[1]{\left( #1 \right)}
\newcommand{\wh}{\widehat}
\newcommand{\ol}{\overline}
\newcommand{\EE}{\mathbb{E}}
\newcommand{\E}{\mathsf{E}}
\newcommand{\FF}{\mathbb{F}}
\newcommand{\CC}{\mathbb{C}}
\newcommand{\RR}{\mathbb{R}}
\newcommand{\bx}{\mathbf{x}}
\author[Fox]{Jacob Fox}
\author[Pham]{Huy Tuan Pham}
\email{\{jacobfox,huypham\}@stanford.edu}
\address{Department of Mathematics, Stanford University, Stanford, CA 94305.}
\author[Zhao]{Yufei Zhao}
\email{yufeiz@mit.edu}
\address{Department of Mathematics, Massachusetts Institute of Technology, Cambridge, MA 02139.}
\thanks{Fox was supported by a Packard Fellowship and by NSF grant DMS-1855635. Zhao was supported by NSF Award DMS-1764176, the MIT Solomon Buchsbaum Fund, and a Sloan Research Fellowship.}
\title{Common and Sidorenko linear equations}
\begin{document}

\begin{abstract} 
A linear equation with coefficients in $\mathbb{F}_q$ is {\it common} if the number of monochromatic solutions in any two-coloring of $\mathbb{F}_q^n$ is asymptotically (as $n \to \infty$) at least the number expected in a random two-coloring. The linear equation is {\it Sidorenko} if the number of solutions in any dense subset of $\mathbb{F}_q^n$ is asymptotically at least the number expected in a random set of the same density. 

In this paper, we characterize those linear equations which are common, and those which are Sidorenko. The main novelty is a construction based on choosing random Fourier coefficients that shows that certain linear equations do not have these properties. This solves problems posed in a paper of Saad and Wolf. 
\end{abstract}

\maketitle

\section{Introduction} \label{sec:introduction}

Sidorenko's conjecture \cite{Si93}  (also conjectured earlier in a stronger form by Erd\H{o}s and Simonovits \cite{Simon}) is a major open problem in extremal graph theory. We say that a bipartite graph $H$ is \emph{Sidorenko} if the density of copies of $H$ in a graph with fixed edge density is asymptotically minimized by the random graph with the same edge density. Sidorenko's conjecture says that all bipartite graphs $H$ are Sidorenko. Many graphs are known to have the Sidorenko property, including bipartite graphs with a vertex complete to the other part, see \cite{CoFoSu,CKLL,CoLee2,CoLee,Hat,KLL,LiSz,Sz}. A coloring variant of Sidorenko's conjecture, the Burr-Rosta conjecture \cite{BR} (extending an earlier conjecture of Erd\H{o}s \cite{Erd}), claims that the density of monochromatic copies of any fixed graph $H$ in any coloring of $K_n$ is asymptotically minimized by the random coloring. While the Burr-Rosta conjecture was disproved by Thomason \cite{T} and Sidorenko \cite{Sid1}, many graphs $H$ are known to satisfy the Burr-Rosta conjecture; such graphs are called {\it common graphs}. 

Saad and Wolf \cite{SaWo} explored various analogues of Sidorenko and common graphs in the arithmetic setting. In this setting, we consider the minimum density of solutions to a system of linear equations in a subset of given density in a finite abelian group, or the minimum density of monochromatic solutions to a system of linear equations in a two-coloring of the abelian group. In particular, we consider the setting of a fixed linear system $L$ with coefficients in some finite field $\FF_q$. We say that a system of linear equations $L=0$ is \emph{common} if the density of monochromatic solutions to this system in any two-coloring of $\FF_q^n$ is asymptotically at least what we expect from a random coloring. Likewise, we say that a linear system $L=0$ is \emph{Sidorenko} if the density of solutions to this system in any dense subset of $\FF_q^n$ is asymptotically at least what we expect from a random set with the same density. The formal definitions for single linear homogeneous equations are given below. While the above arithmetic problems do not directly correspond with graph problems, they share many common features. We refer the reader to \cite{SaWo} for more details.

\begin{definition} \label{def:sid-common}
Given a linear form $L(x_1, \dots, x_k) = a_1 x_1 + \cdots + a_k x_k$ with $a_1, \dots, a_k \in \FF_q$, we say that the equation $L = 0$ is \emph{Sidorenko} if for every $n$ and every $A \subseteq \FF_q^n$, the number of solutions to $L(x_1, \dots, x_k) = 0$ with $x_1, \dots, x_k \in A$ is at least $|A|^k/q^n$. We say that the equation $L = 0$ is \emph{common} if for every $n$ and every coloring of $\FF_q^n$ with two colors, the number of monochromatic solutions $(x_1, \dots, x_k) \in (\FF_q^n)^k$ to $L(x_1, \dots, x_k) = 0$ is at least $2^{1-k}q^{n(k-1)}$.
\end{definition}

Note that if we choose a random subset $S$ of $\FF_q^n$ by picking each element independently with probability $|A|/q^n$, then the expected number of solutions to $L(x_1,\dots,x_k)=0$ with $x_1,\dots,x_k\in S$ is $q^{n(k-1)}(|A|^k/q^{nk} + o_{n \to \infty}(1))$ where the $o_{n \to \infty}(1)$ term accounts for the density of solutions of $L(x_1,\dots,x_k)=0$ with $x_i=x_j$ for some $i\ne j$. Here, we denote by $o_{n \to \infty}(1)$ a term which tends to $0$ uniformly in $A$ as $n\to \infty$. Similarly, in a random two-coloring of $\FF_q^n$ where the color of each element is chosen independently and uniformly, the expected number of monochromatic solutions to $L(x_1,\dots,x_k)=0$ is $q^{n(k-1)}(2^{1-k}+o_{n \to \infty}(1))$. Moreover, by a simple argument given at the beginning of Section \ref{sec:main}, we can show that if for every $A\subseteq \FF_q^n$, the number of solutions to $L(x_1,\dots,x_k)=0$ with $x_1,\dots,x_k\in A$ is at least $q^{n(k-1)}(|A|^k/q^{nk}+o_{n \to \infty}(1))$, then in fact $L=0$ is Sidorenko. Similarly, if the number of monochromatic solutions to $L(x_1,\dots,x_k)=0$ is at least $q^{n(k-1)}(2^{1-k}+o_{n \to \infty}(1))$, then $L=0$ is common. 

Cameron, Cilleruelo, and Serra \cite{CCS} proved using a cancellation argument that, in an abelian group $G$,
given a linear equation $L = 0$ with an \emph{odd} number of variables and whose coefficients are coprime to $|G|$,
the number of monochromatic solutions to $L=0$ in any 2-coloring of $G$ depends only on the size of the color classes.
It easily follows that the linear equation is common in this case. Saad and Wolf  \cite[Conjecture 5.2]{SaWo} made the following conjecture for linear equations with an \emph{even} number of variables.

\begin{conjecture}[\cite{SaWo}]
Let $k\ge 2$ be an integer. A linear equation of the form $$a_1x_1+\cdots+a_{2k}x_{2k}=0$$ is common in $\FF_q^n$ if and only if we can partition $\{a_1,...,a_{2k}\}$ into $k$ pairs, each summing to $0$. 
\end{conjecture}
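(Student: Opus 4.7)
Everything rests on the Fourier identity
\begin{equation*}
\sum_{L(\vec x)=0}\prod_{i=1}^{2k}f(x_i)=q^{-n}\sum_{\gamma\in\FF_q^n}\prod_{i=1}^{2k}\widehat f(a_i\gamma),
\end{equation*}
where $\widehat f(\gamma)=\sum_x f(x)\chi_\gamma(x)$ for the standard characters $\chi_\gamma$ of $\FF_q^n$. For the \emph{sufficient} direction, assume $\{a_1,\ldots,a_{2k}\}$ partitions into pairs with $a_{2j-1}+a_{2j}=0$. Applied to $f=\mathbf 1_A$, the product on the right factors pairwise as $\prod_j|\widehat{\mathbf 1_A}(a_{2j-1}\gamma)|^2\ge 0$, so retaining only $\gamma=0$ yields the Sidorenko bound $\#\{\vec x\in A^{2k}:L(\vec x)=0\}\ge|A|^{2k}/q^n$. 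Sidorenko then implies common via the convexity estimate $|A|^{2k}+|A^c|^{2k}\ge 2(q^n/2)^{2k}$ applied to the two color classes.

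For the \emph{necessary} direction, assume $\{a_i\}\neq\{-a_i\}$ as multisets (after handling the zero-coefficient degenerate cases by reduction to a smaller equation). Writing $\alpha=\widehat f(0)/q^n$ for the mean of a $\pm 1$-coloring $f$ and expanding $\prod(1+f(x_i))/2+\prod(1-f(x_i))/2$ into even-sized subsets $S\subseteq[2k]$, the Fourier identity applied to each $S$-term collapses every $S\subsetneq[2k]$ contribution onto its $\gamma=0$ value (using $a_i\neq 0$), yielding
\begin{equation*}
\mathrm{mono}(f)=2^{1-2k}q^{n(2k-1)}\cdot\frac{(1+\alpha)^{2k}+(1-\alpha)^{2k}}{2}+2^{1-2k}q^{-n}\sum_{\gamma\neq 0}\prod_{i=1}^{2k}\widehat f(a_i\gamma).
\end{equation*}
The first term is always $\ge 2^{1-2k}q^{n(2k-1)}$, so the task is to produce an $f$ whose Fourier remainder (the second term) is strictly negative and dominates the $\alpha^2$-level excess of the first term.

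I would construct $f$ via a random Fourier coefficient construction. A simple model is the lift $f(x)=g(\gamma_0\cdot x)$ for a nonzero $\gamma_0\in\FF_q^n$ and a seed $g:\FF_q^m\to\{\pm 1\}$; then $\widehat f$ concentrates on an $m$-dimensional subspace and the Fourier remainder reduces to a constant times $S(g):=\sum_{c\neq 0}\prod_i\widehat g(a_ic)$ (the coefficients $a_i$ acting by scalars on $\FF_q^m$). Notably, $S(g)\ge 0$ identically when $\{a_i\}=\{-a_i\}$ since each summand becomes a product of squared moduli, so the pairing obstruction is visible at this Fourier level. When $\{a_i\}\neq\{-a_i\}$, the plan is to pick $g$ with random Fourier coefficients on a symmetric set and show, via a cumulant/variance calculation using the identity $\sum_{c\neq 0}e_q(cz)=q\mathbf 1[z=0]-1$ (which rewrites $S(g)=q\sum_{L(\vec y)=0}\prod_i g(y_i)-\widehat g(0)^{2k}$), that $S(g)$ is negative with positive probability and of the required magnitude.

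The main obstacle is this Fourier-moment calculation. Wick-style pairings of $\EE_g S(g)$ organize by perfect matchings of $[2k]$ whose pairs satisfy $a_i+a_j=0$; by hypothesis these are absent, so the leading Gaussian-like contribution vanishes and negativity must come from higher moments or from a carefully chosen deterministic skeleton that aligns with the specific asymmetry of $\{a_i\}$. The parameter $m$ provides room to ensure that the seed $g$ is near-balanced (so the $\alpha^2$-excess is negligible) and that $S(g)$ concentrates; translating the combinatorial condition ``multiset not invariant under negation'' into the required strictly signed inequality for $S(g)$ is the genuinely hard step, and is where the paper's novel construction with random Fourier coefficients does the work.
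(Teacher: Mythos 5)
Your sufficient direction is fine and matches the paper's: the pairing makes the Fourier-side product a sum of squared moduli (equivalently, Cauchy--Schwarz on the physical side), giving Sidorenko and hence common. Your reduction of the necessary direction is also correct in outline: by the Fourier identity it suffices to exhibit $f \colon \FF_q \to [0,1]$ with $\E f = 1/2$ and $\sum_{r \ne 0} \wh f(a_1 r) \cdots \wh f(a_k r) < 0$ (and one may work with a single coordinate, i.e.\ $n=1$, exactly as your lift $f(x)=g(\gamma_0\cdot x)$ suggests; the paper does this via the set-vs-functional equivalence rather than insisting on a $\pm1$-valued seed, which lets one prescribe Fourier coefficients freely).

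The genuine gap is in how you propose to close the argument. You correctly observe that for i.i.d.\ random unit phases $\xi_r$ (with $\xi_r=\ol{\xi_{-r}}$), the expectation of each product $\wh f(a_1 r)\cdots\wh f(a_k r)$ vanishes precisely because the coefficients admit no canceling perfect matching --- but you then conclude that ``negativity must come from higher moments or from a carefully chosen deterministic skeleton,'' and you flag this as an unresolved hard step. No higher-moment or cumulant analysis is needed, and looking for one is the wrong direction. The paper's trick is a one-line first-moment argument: the random sum $\sum_{r\ne 0}\xi_{a_1 r}\cdots\xi_{a_k r}/(2q)^k$ has expectation zero, yet with positive probability it is \emph{strictly positive} --- namely on the event that every $\xi_r$ is within $\epsilon$ of $1$, where each summand has real part at least $\tfrac12 (2q)^{-k}$, so the sum exceeds $\tfrac{q-1}{2}(2q)^{-k}$. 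A mean-zero real random variable that is strictly positive with positive probability must also be strictly negative with positive probability, and any realization in that event gives the required $f$ (which lies in $[0,1]$ automatically since $|\wh f(r)|=1/(2q)$ for $r\ne 0$). Your proposal as written does not contain this step, and without it the construction does not produce the negative Fourier remainder you need.
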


Saad and Wolf \cite{SaWo} noted that the ``if'' direction of their conjecture follows easily from an application of the Cauchy-Schwarz inequality. 

The following question was attributed to Alon \cite[Question 4.1]{SaWo}.

\begin{question}\label{q:alon}
Is it true that adding sufficiently many free variables makes any linear system not common?
\end{question}

In this paper, we characterize all linear homogeneous equations (i.e., systems with one equation) that are common as well as those that are Sidorenko, resolving the above conjecture and question.
In the theorem below, part (b) is new, which we prove by constructing a function with randomly chosen Fourier coefficients. Parts (a) and (c) are previously known~\cite{SaWo}, though we include their proofs for completeness.

\begin{theorem} \label{thm:main}
    Let $L(x_1, \dots, x_k) = a_1 x_1 + \cdots + a_k x_k$ be a linear form with $a_1, \dots, a_k \in \FF_q \setminus\{0\}$. 
	\begin{enumerate}
	\item[(a)] \label{match} If $a_1, \dots, a_{k}$ can be partitioned into pairs each summing to zero, then the equation $L = 0$ is Sidorenko and common.
	\item[(b)] \label{not-match} If $k$ is even and $a_1, \dots, a_{k}$ cannot be partitioned into pairs each summing to zero, then the equation $L = 0$  is not common and not Sidorenko.
	\item[(c)] \label{odd} If $k$ is odd, then the equation $L = 0$  is common but not Sidorenko.
	\end{enumerate}
\end{theorem}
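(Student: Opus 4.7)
The plan is to use the additive Fourier transform on $\FF_q^n$ throughout. Setting $f = 1_A$, $\alpha = |A|/q^n$, and $g = f - \alpha$ (so $\widehat{g}(0)=0$) and applying the identity $\sum_{L=0}\prod_i h(x_i) = q^{-n}\sum_r \prod_i \widehat{h}(a_i r)$ alongside an expansion of $\prod f+\prod(1-f)$ in $g$, all terms indexed by $S \subseteq [k]$ with $0<|S|<k$ vanish because $\sum g = 0$. This produces the clean identities
\[\sum_{L=0}\prod_i f(x_i) = \alpha^k q^{n(k-1)} + \sum_{L=0}\prod_i g(x_i)\]
and $\sum_{L=0}[\prod f+\prod(1-f)] = q^{n(k-1)}[\alpha^k+(1-\alpha)^k] + (1+(-1)^k)\sum_{L=0}\prod_i g$. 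Part (a) then follows from pairing $a_i + a_j = 0$, which rewrites $\sum_r \prod_i \widehat{g}(a_i r)$ as $\sum_r \prod_{\text{pairs}}|\widehat{g}(a_i r)|^2 \ge 0$, combined with convexity $\alpha^k + (1-\alpha)^k \ge 2^{1-k}$. Part (c)-common is immediate because $1+(-1)^k = 0$ for odd $k$.

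The remaining task is to construct $f$ with $\sum_{L=0}\prod g < 0$, which violates commonness when $k$ is even and Sidorenko in both remaining cases. I plan to reduce to one dimension: fix $r_0 \in \FF_q^n\setminus\{0\}$ and support $\widehat{g}$ on the punctured line $\FF_q^* r_0$. Setting $z_c = \widehat{g}(c r_0)$ with the reality constraint $z_{-c}=\overline{z_c}$, the Fourier sum collapses to the one-variable real polynomial $T(z) := \sum_{c\in\FF_q^*} \prod_{i=1}^k z_{a_ic}$.

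The heart of the proof is showing $T(z)<0$ for some admissible $z$, under the hypothesis that $\{a_1,\ldots,a_k\}$ is not pairable or $k$ is odd. I will use a probabilistic argument. Let $(z_c)$ be a centered complex Gaussian with $\EE z_c z_{c'} = \one[c+c'=0]$ (which respects $z_{-c} = \overline{z_c}$ and is well-defined on the real vector space of admissible $z$). By Wick's formula, $\EE \prod_i z_{a_ic}$ counts perfect matchings of $[k]$ in which each pair $\{i,j\}$ has $a_i+a_j=0$; under our hypothesis no such matching exists, so $\EE T=0$. Meanwhile $\EE T^2 \ge q-1$: at $c_2=-c_1$, the identity bipartite matching $i\leftrightarrow i+k$ satisfies $a_ic_1 + a_i(-c_1)=0$ and contributes $\ge 1$ for each of the $q-1$ nonzero values of $c_1$. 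Thus $T$ is a nonzero real polynomial with mean zero under a nondegenerate Gaussian law, so it must take negative values.

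To finish, I rescale the chosen $z$ using the degree-$k$ homogeneity of $T$ so that $\|g\|_\infty \le 1/2$, which is possible since $|g(x)| \le q^{-n}(q-1)\|z\|_\infty$ and we may take $n$ as large as needed. Then $f=\tfrac12 + g$ beats the common threshold by a margin of order $q^{n(k-1)}$, and $f=\alpha+g$ beats the Sidorenko threshold similarly. Random rounding to a set $A$ with $\one[x\in A]$ independent Bernoulli of mean $f(x)$ introduces only $O(q^{n(k-2)})$ correction from degenerate $k$-tuples and $O(q^{n(k-1)/2})$ fluctuations, both dominated by the margin for large $n$; a genuine coloring/set therefore inherits the violation. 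I expect the main obstacle to be the key lemma: setting up the Gaussian so it respects the reality constraint $z_{-c} = \overline{z_c}$ and rigorously verifying the Wick-formula calculations of $\EE T$ and $\EE T^2$ under this constraint.
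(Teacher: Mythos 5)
Your proposal is correct and follows the same core strategy as the paper: reduce everything to the sign of $\sum_{r\ne 0}\wh f(a_1r)\cdots\wh f(a_kr)$ via the Fourier identity, and choose the nonzero Fourier coefficients at random so that this sum has expectation zero precisely because $a_1,\dots,a_k$ admit no canceling pairing. The one genuine difference is how you conclude that the sum is not almost surely zero: the paper takes the coefficients to be uniform unit complex numbers (subject to $\wh f(-r)=\ol{\wh f(r)}$) and exhibits an explicit positive-probability event (all $\xi_r$ within $\epsilon$ of $1$) on which the sum is strictly positive, then invokes the zero mean; you instead take complex Gaussians and compute $\EE T=0$ and $\EE T^2\ge q-1$ by Wick's formula. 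Your second-moment route is a clean alternative and has the mild advantage of covering part (c) (not Sidorenko for odd $k$, where no perfect matching of $[k]$ exists at all) by the same lemma, whereas the paper writes down the explicit function $\wh f(r)=-1/(2q)$ there; the cost is that you must carry out the Wick computation carefully on the real vector space cut out by $z_{-c}=\ol{z_c}$, including the characteristic-two case where each $z_c$ is real, and you still need a separate rescaling step to land $f$ in $[0,1]$. Two small points to tidy: the bound $|g(x)|\le q^{-n}(q-1)\|z\|_\infty$ has a spurious $q^{-n}$ under the inversion formula $g(x)=\sum_r\wh g(r)r(x)$ (harmless, since homogeneity of $T$ lets you rescale $z$ regardless, and no passage to large $n$ is needed for this step), and your part (a) is just the Fourier-side rendering of the paper's Cauchy--Schwarz argument, using that $\wh g(a_jr)=\ol{\wh g(a_ir)}$ when $a_i+a_j=0$ and $g$ is real.
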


Adding a free variable to a linear equation is the same as adding a variable with coefficient $0$ to the equation. Hence, the density of solutions to the new equation (with $\ell$ coefficients being $0$) is simply the density of solutions to the original equation (with only nonzero coefficients), multiplied by the set density to the power $\ell$. The following theorem answers Question~\ref{q:alon}. 

\begin{theorem} \label{thm:free-var}
    Let $L(x_1, \dots, x_k) = a_1 x_1 + \cdots + a_k x_k$ be a linear form with $a_1, \dots, a_k \in \FF_q \setminus\{0\}$. 
    Let $L'$ be the linear form obtained by adding $\ell \ge 1$ free variables to $L$ (equivalently, $L'(x_1, \dots, x_{k+\ell}) = a_1 x_1 + \cdots + a_k x_k + 0 x_{k+1} + \cdots + 0 x_{k + \ell}$). If $a_1, \dots, a_{k}$ can be partitioned into pairs each summing to zero, the equation $L' = 0$  is Sidorenko and common, and otherwise $L' = 0$  is not common and not Sidorenko.
\end{theorem}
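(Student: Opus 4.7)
The plan is to reduce Theorem~\ref{thm:free-var} to Theorem~\ref{thm:main} via the identity highlighted just before the theorem: writing $S(A)$ (respectively $S'(A)$) for the number of solutions to $L=0$ in $A^k$ (respectively to $L'=0$ in $A^{k+\ell}$), we have $S'(A)=S(A)\,|A|^\ell$, so for any $2$-coloring $(A,B)$ of $\FF_q^n$ the number of monochromatic $L'$-solutions equals $S(A)|A|^\ell+S(B)|B|^\ell$. The Sidorenko property for $L'$ is therefore equivalent to $S(A)\ge|A|^k/q^n$ for all $A$, i.e., to the Sidorenko property for $L$; combined with Theorem~\ref{thm:main}, this settles the Sidorenko portion. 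The positive direction of the common portion follows at once: if the $a_i$'s pair, $L$ is Sidorenko by Theorem~\ref{thm:main}(a), hence so is $L'$, and Sidorenko implies common via the convexity bound $\alpha^{k+\ell}+\beta^{k+\ell}\ge 2^{1-k-\ell}$ for $\alpha+\beta=1$.

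For the negative direction I would split by the parity of $k$, using the corresponding case of Theorem~\ref{thm:main}. If $k$ is even and the $a_i$'s do not pair, Theorem~\ref{thm:main}(b) gives a $2$-coloring $(A_0,B_0)$ of $\FF_q^n$ with $s_{A_0}+s_{B_0}<2^{1-k}$, writing $s_A=S(A)/q^{n(k-1)}$ and $\alpha=|A|/q^n$. I then balance by lifting to $\FF_q^{n+m}$ via $\hat A=(A_0\times U)\cup(B_0\times\bar U)$ for $U\subseteq\FF_q^m$ of density $\tfrac12$ (exactly when $q$ is even, approximately via $m$ large otherwise); a short Fourier computation with the $\pm1$-valued function $\one_{A_0}-\one_{B_0}$ gives $s_{\hat A}+s_{\hat B} = s_{A_0}+s_{B_0} + R(\alpha_0)$, where $R(\alpha_0)=2^{1-k}(2\alpha_0-1)^k+2^{1-k}-(\alpha_0^k+\beta_0^k)\le 0$; the last inequality, via $u=2\alpha_0-1$, reduces to the binomial identity $(1+u)^k+(1-u)^k\ge 2(1+u^k)$ for $|u|\le 1$ and $k$ even. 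The resulting balanced witness yields monochromatic $L'$-count $(s_{\hat A}+s_{\hat B})(1/2)^\ell q^{(n+m)(k+\ell-1)}<2^{1-k-\ell}q^{(n+m)(k+\ell-1)}$, as required.

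If $k$ is odd, Theorem~\ref{thm:main}(c) says $L$ is common but not Sidorenko, so there is $A_0\subseteq\FF_q^n$ with $s_{A_0}<\alpha_0^k$; set $\eta_0=\alpha_0^k-s_{A_0}>0$. The Cameron--Cilleruelo--Serra identity $s_A+s_B=\alpha^k+\beta^k$ (valid for $k$ odd and nonzero $a_i$'s; one expands $\one_A^k+\one_B^k$ over solutions to $L=0$ and computes each $\sum_{L=0}\prod_{i\in I}\one_A(x_i)=|A|^{|I|}q^{n(k-|I|-1)}$ for $|I|<k$) lets me write the normalized monochromatic $L'$-count on any $(A,\bar A)$ as
\[
s_A\alpha^\ell+s_{\bar A}\beta^\ell=\alpha^{k+\ell}+\beta^{k+\ell}-\eta(\alpha^\ell-\beta^\ell),
\]
with $\eta=\alpha^k-s_A$. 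At $\alpha=\tfrac12+\delta$ the right-hand side equals $2^{1-k-\ell}+O(\delta^2)-\eta\cdot\Theta(\delta)$, so it suffices to exhibit a witness with $\alpha=\tfrac12+\delta$ (for some small $\delta>0$) and $\eta$ bounded below. I arrange this by taking $A_0$ (WLOG of density $\alpha_0<\tfrac12$, achievable by taking a tensor product $A_0\times R$ with a random dilution $R$ of density $1/(2\alpha_0)$ if necessary) and padding in $\FF_q^{n+m}$ via $A:=(A_0\times\FF_q^m)\cup(\bar A_0\times C)$ for a random $C\subseteq\FF_q^m$ of density $\gamma=(\tfrac12+\delta-\alpha_0)/(1-\alpha_0)$; a short Fourier computation gives $\EE[s_A]=\alpha^k-(1-\gamma)^k\eta_0$, so $\eta=(1-\gamma)^k\eta_0$ remains bounded below as $\delta\to 0^+$, and derandomization picks a specific $C$.

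The main obstacle is the odd-$k$ case, because $L$ is itself common by Cameron--Cilleruelo--Serra, and so one cannot transfer a non-common witness for $L$ to one for $L'$ as in the even case. The key realization is that for $\ell\ge 1$, the Sidorenko deficit of $L$---which is quadratically small in $\alpha-\tfrac12$ near the balanced coloring---translates, through the $\eta(\alpha^\ell-\beta^\ell)$ term, into a \emph{linearly-in-$(\alpha-\tfrac12)$} common deficit for $L'$; perturbing the density slightly off $\tfrac12$ exposes it, and the padding construction realizes this perturbation while preserving the deficit.
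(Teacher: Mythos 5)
Your reduction of the Sidorenko assertion to Theorem~\ref{thm:main} via $S'(A)=S(A)\,|A|^\ell$ is correct, as is the positive direction for commonness; and your odd-$k$ argument --- converting a Sidorenko deficit $\eta$ at density $\tfrac12+\delta$ into a common deficit $\eta(\alpha^\ell-\beta^\ell)=\Theta(\eta\delta)$ for $L'$ via the exact identity $s_A+s_{\bar A}=\alpha^k+\beta^k$ --- is a sound set-level analogue of the paper's construction (the paper instead takes an explicit function $f$ with $\E f=\tfrac12+c$ and negative Fourier tail, which realizes the same first-order cancellation without any padding or derandomization).

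The gap is in the even-$k$ balancing step. Write $g=1_{A_0}-1_{B_0}$ on $\FF_q^n$ and $h=1_U-1_{\bar U}$ on $\FF_q^m$, so that $1_{\hat A}(x,y)=\tfrac12\bigl(1+g(x)h(y)\bigr)$. Extending $\Lambda_{L=0}$ multilinearly and using that it factors over the product $\FF_q^n\times\FF_q^m$, every mixed term with $1\le|I|<k$ arguments equal to $g\otimes h$ contributes $(\E g)^{|I|}(\E h)^{|I|}=0$ because $\E h=0$, so the correct identity is
\[
s_{\hat A}+s_{\hat B}=2^{1-k}\bigl(1+\Lambda_{L=0}(g)\,\Lambda_{L=0}(h)\bigr),
\]
whereas your claimed value $s_{A_0}+s_{B_0}+R(\alpha_0)$ equals $2^{1-k}\bigl(1+\Lambda_{L=0}(g)\bigr)$. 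The two agree only when $\Lambda_{L=0}(h)=1$, which essentially never happens: $\wh h(0)=0$ and $\sum_r|\wh h(r)|^2=1$, so a pseudorandom $U$ has $\Lambda_{L=0}(h)=o(1)$, and for $q$ even with $U$ an index-two subgroup one gets $\Lambda_{L=0}(h)=0$ unless all the $a_i$ are equal (in which case they pair up in characteristic $2$, contradicting the hypothesis). Thus the balancing washes out the deficit and returns $s_{\hat A}+s_{\hat B}$ to (approximately) $2^{1-k}$. Fortunately no balancing is needed: either work with the functional formulation as the paper does --- the witness of Theorem~\ref{thm:main}(b) is a function with $\E f=\tfrac12$ exactly, so $\Lambda_{L'=0}(f)+\Lambda_{L'=0}(1-f)=2^{-\ell}\bigl(\Lambda_{L=0}(f)+\Lambda_{L=0}(1-f)\bigr)<2^{1-k-\ell}$, and the sampling argument of Section~\ref{sec:main} transfers this to sets --- or observe that approximate balance suffices, since $s_A\alpha^\ell+s_{\bar A}\beta^\ell\le\max(\alpha,\beta)^\ell\,(s_A+s_{\bar A})$, and the sampled set has $\alpha=\tfrac12+o(1)$ while $s_A+s_{\bar A}$ stays bounded below $2^{1-k}$.
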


It is natural to study the same problems for inhomogeneous linear equations. 

\begin{definition} \label{def:inhom}
Let $L(x_1, \dots, x_k) = a_1 x_1 + \cdots + a_k x_k$ be a linear form with $a_1, \dots, a_k \in \FF_q \setminus\{0\}$. We say that the linear form $L$ is \emph{inhomogeneous-Sidorenko} if for every $n$, every nonzero $b\in \FF_q^n$, and every $A \subseteq \FF_q^n$, the number of solutions to $L(x_1, \dots, x_k) = b$ with $x_1, \dots, x_k \in A$ is at least $|A|^k/q^n$. We say that the linear form $L$ is \emph{inhomogeneous-common} if for every $n$, every nonzero $b\in \FF_q^n$, and every coloring of $\FF_q^n$ with two colors, the number of monochromatic solutions $(x_1, \dots, x_k) \in (\FF_q^n)^k$ to $L(x_1, \dots, x_k) = b$ is at least $2^{1-k}q^{n(k-1)}$. 
\end{definition}

We remark that the choice of nonzero $b$ above is inconsequential, since for any $n$, if the properties in Definition \ref{def:inhom} are satisfied for any nonzero $b\in \FF_q^n$, then they are satisfied for every nonzero $b \in \FF_q^n$. Indeed, for any nonzero $b,b' \in \FF_q^n$, we can find an invertible linear transformation $C$ such that $Cb=b'$. Then $a_1x_1+\cdots+a_kx_k=b$ if and only if  $a_1Cx_1+\cdots+a_kCx_k=b'$, so the solutions to $L=b$ in $A$ and the solutions to $L=b'$ in $CA$ are in one-to-one correspondence via the invertible transformation $C$. The next result  gives a simple characterization of the inhomogeneous-common linear forms, and shows that no linear form is inhomogeneous-Sidorenko.

\begin{theorem} \label{thm:inhom}
    Let $L(x_1, \dots, x_k) = a_1 x_1 + \cdots + a_k x_k$ be a linear form with $a_1, \dots, a_k \in \FF_q \setminus\{0\}$. 
    Let $L'$ be the linear form obtained by adding $\ell \ge 0$ free variables to $L$.
    
    Then $L'$ is never inhomogeneous-Sidorenko, and $L'$ is inhomogeneous-common if and only if $k$ is odd and $\ell = 0$.
\end{theorem}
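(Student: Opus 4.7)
My plan separates the Sidorenko claim from the common claim, and within the latter the ``if'' and ``only if'' directions.

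For the Sidorenko claim, I would take $A = V$, a codimension-one subspace of $\FF_q^n$ with $b \notin V$ (which exists because $b \neq 0$). Since $V$ is closed under the scalar-coefficient form $L'$, no element of $V^{k+\ell}$ solves $L'(x) = b$, so the solution count is $0$, whereas $|A|^{k+\ell}/q^n = q^{(n-1)(k+\ell)-n}$ exceeds $1$ for $n$ large (when $k+\ell \geq 2$; the case $k+\ell = 1$ is handled directly), violating the Sidorenko bound.

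For the common claim, I would run a Fourier analysis. Writing a 2-coloring $f = 1_A$ of density $\alpha$ as $f = \alpha + h$ with $\hat h(0) = 0$, and fixing a nontrivial additive character $e$ of $\FF_q^n$, a standard expansion of the solution count of $L(x_1,\ldots,x_k) = b$ yields
\[
T(f;b) = q^{n(k-1)}(\alpha^k + R_b), \qquad T(g;b) = q^{n(k-1)}\bigl((1-\alpha)^k + (-1)^k R_b\bigr),
\]
where $R_b := \sum_{s \neq 0}e(s\cdot b)\prod_{i=1}^k \hat h(a_i s)$. The free variables contribute factors of $|A|^\ell$ and $|A^c|^\ell$, so the monochromatic count for $L'(x) = b$ equals
\[
N(A,b) = q^{n(k+\ell-1)}\Bigl[\alpha^{k+\ell}+(1-\alpha)^{k+\ell}+R_b\bigl(\alpha^\ell + (-1)^k(1-\alpha)^\ell\bigr)\Bigr].
\]
For the ``if'' direction ($k$ odd, $\ell = 0$), the $R_b$-coefficient is $1-1 = 0$, so $N(A,b) = q^{n(k-1)}(\alpha^k + (1-\alpha)^k) \geq 2^{1-k}q^{n(k-1)}$ by convexity.

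For the ``only if'' direction I would exhibit a bad coloring in each remaining case. If $k$ is even, the $R_b$-coefficient $\alpha^\ell + (1-\alpha)^\ell$ is positive, while the baseline $\alpha^{k+\ell}+(1-\alpha)^{k+\ell}$ is minimized to $2^{1-k-\ell}$ at $\alpha = 1/2$, so any (near-)balanced coloring with $R_b < 0$ beats the common threshold. For $q = 2$ the hyperplane $f = 1_V$ with $b \notin V$ gives $\hat h$ supported on a single nonzero frequency in $V^\perp$ and hence $R_b = -(1/2)^k$; for general $q$ I would invoke the random Fourier coefficient construction from the proof of Theorem~\ref{thm:main}(b) to produce such a near-balanced coloring. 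If $k$ is odd and $\ell \geq 1$, the $R_b$-coefficient $\alpha^\ell - (1-\alpha)^\ell$ vanishes at $\alpha = 1/2$ and has sign matching $\alpha - 1/2$, while the baseline excess is only of order $(\alpha - 1/2)^2$; a coloring with $|R_b|$ bounded below and $R_b$ of sign opposite to $\alpha - 1/2$, again supplied by the random Fourier construction, then beats the threshold.

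The main obstacle is the construction in the ``only if'' cases: showing that the random Fourier coefficient technique from Theorem~\ref{thm:main}(b) still produces a coloring with $R_b$ of controlled sign and magnitude once the twisting phase $e(s \cdot b)$ is present, uniformly in $q$, $k$, and $\ell$.
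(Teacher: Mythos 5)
Your framework is the same as the paper's: the hyperplane avoiding $b$ for the non-Sidorenko claim, the Fourier expansion of $\Lambda_{L'=b}$ with the twist $r(b)$, the observation that the twisted sum enters with coefficient $\alpha^\ell + (-1)^k(1-\alpha)^\ell$, convexity for the case $k$ odd, $\ell=0$, and a perturbative analysis around $\alpha = 1/2$ for $k$ odd, $\ell\ge 1$. These parts are correct and match the paper.

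The gap is exactly where you flag it: in the negative cases you never actually produce the coloring, deferring to an ``adaptation of the random Fourier coefficient construction'' whose details you do not check. That adaptation is not routine as stated. For $k$ even, the expectation computation in Theorem~\ref{thm:main}(b) used the hypothesis that $a_1,\dots,a_k$ cannot be paired into canceling pairs, which is \emph{not} assumed in Theorem~\ref{thm:inhom}; when the coefficients do pair up, $\EE[\xi_{a_1r}\cdots\xi_{a_kr}]=1$ rather than $0$, and the argument must be split into cases. For $k$ odd with $\ell\ge 1$, the random construction produces a balanced $f$ with $\E f = 1/2$, precisely where your coefficient $\alpha^\ell-(1-\alpha)^\ell$ vanishes, so it supplies nothing without an additional density perturbation whose interaction with $R_b$ you would have to control. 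The point you are missing is that no randomness is needed at all: since $\sum_{r\ne 0} r(b) = -1$ for $b\ne 0$, the deterministic choice $\wh f(0)=1/2$, $\wh f(r) = 1/(2q)$ for all $r\ne 0$ (a valid $[0,1]$-valued function) already gives $R_b = -(2q)^{-k}<0$, settling the $k$ even case for every $q$; and for $k$ odd, $\ell\ge 1$ the paper perturbs this same function to $\wh f(0)=1/2+c$, computes the first-order term in $c$ exactly, and finds it negative. Your $q=2$ hyperplane example is the special case of this construction, and generalizing it in this way (rather than via randomness) closes the gap cleanly.
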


We note that Leo Versteegen~\cite{V} has generalized our results to arbitrary abelian groups.

\section{Proofs}\label{sec:main}

We will use two different notations for expectation: $\E$ and $\EE$.
First, the symbol $\E$ denotes averaging. For example, for a function $f \colon \FF_q^n \to \RR$, we write $\E f = \E_\bx f(\bx) = \E_{\bx \in \FF_q^n} f(\bx)$ to denote the average value of $f(\bx)$ as $\bx$ varies uniformly over its domain (we omit the subscripts when there is no confusion). 
Second, as we will be constructing a random set $A$, we write $\EE_A$ to denote the expectation over this probability distribution.

We note that the Sidorenko property is equivalent to its functional version, where we replace the subset $A \subseteq \FF_q^n$ by a function $f:\FF_q^n\to [0,1]$. Indeed, given a linear form $L(x_1, \dots, x_k) = a_1 x_1 + \cdots + a_k x_k$ and a function $f \colon \FF_q^n \to [0,1]$, we write
\[
\Lambda_{L=0}(f) := \E_{\bx = (x_1, \dots, x_k) \in (\FF_q^n)^k : L(\bx) = 0} \left[f(x_1) \cdots f(x_{k})\right].
\]
Then the equation $L = 0$ is Sidorenko if and only if for every $n$ and every $f \colon \FF_q^n \to [0,1]$,
\begin{equation}\label{eq:functional}
\Lambda_{L=0}(f) \ge (\E f)^{k}.
\end{equation}
Indeed, being Sidorenko in the sense of Definition~\ref{def:sid-common} is equivalent to \eqref{eq:functional} for all $f$ of the form $1_A$ with $A \subseteq \FF_q^n$. Conversely, suppose \eqref{eq:functional} fails for some $f \colon \FF_q^n \to [0,1]$.
We can extend $f$ to a function on $\FF_q^{n+n'}$ by forgetting the $n'$ new coordinates when evaluating $f$. We can now sample a random subset $A \subseteq \FF_q^{n+n'}$ by independently including each $x \in \FF_q^{n+n'}$ with probability $f(x)$. We have $\EE_A[ \E 1_A] = \E[f]$, and $\EE_A \Lambda_{L=0}(1_A) = \Lambda_{L=0}(f) + o_{n'\to\infty}(1)$, where the $o_{n'\to\infty}(1)$ term accounts for the proportion of $\bx \in (\FF_q^{n+n'})^k$ with $L(\bx) = 0$ and not all $k$ coordinates distinct, which goes to $0$ as $n' \to \infty$. Since $\EE_A[(\E 1_A)^k] \ge (\EE_A[\E 1_A])^k = \E[f]^k$ by the convexity of $t\mapsto t^k$, it follows that if \eqref{eq:functional} fails for some $f$, then for $n'$ large enough, there exists $A \subseteq \FF_q^{n+n'}$ that such that \eqref{eq:functional} fails for $f = 1_A$. Thus the set version and the functional formulations of the property of being Sidorenko are equivalent. The same argument shows that if $\Lambda_{L=0}(A) \ge (|A|/q^{n})^k + o_{n \to \infty}(1)$ for every $A$, where the $o_{n \to \infty}(1)$ term tends to $0$ as $n\to \infty$ uniformly in $A$, then in fact $\Lambda_{L=0}(A) \ge (|A|/q^n)^k$. 

Likewise, the property of being common also has an equivalent functional formulation: given a $k$-variable linear form $L$ over $\FF_q$, the equation $L = 0$ is common if and only if for every $f \colon \FF_q^n \to [0,1]$, 
\[
\Lambda_{L=0}(f) + \Lambda_{L=0}(1-f) \ge 2^{1-k}.
\]

We denote the Fourier transform of a function $f \colon \FF_q^n \to \CC$ by
\[
\wh f(r) = \E_{x \in \FF_q^n} f(x) \ol{r(x)}, \qquad r \in \wh{\FF_q^n}
\]
where $\wh{\FF_q^n}$ is the group of characters of $\FF_q^n$, i.e., homomorphisms $r\colon \FF_q^n \to \CC^\times$.
The groups $\wh{\FF_q^n}$ and $\FF_q^n$ are isomorphic by associating $y \in \FF_q^n$ with the character $\gamma_y \in \wh{\FF_q^n}$ defined by $\gamma_y(x) = \exp(2\pi i \operatorname{tr} \ang{x,y}/p)$ where $p$ is the characteristic of $\FF_q$ and $\operatorname{tr} \colon \FF_q \to \FF_p$ is the standard trace map. 
We write the dual group $\wh \FF_q$ additively, so that, e.g., $0 \in \wh {\FF_q^n}$ is the constant-$1$ function $\gamma_0$, and $a \gamma_y = \gamma_{ay} \in \wh {\FF_q^n}$ for any $a \in \FF_q$ and $y \in \FF_q^n$.

Recall the following identity which relates a twisted convolution with the Fourier transform: given $L(x_1, \dots, x_k) = a_1 x_1 + \dots + a_k x_k$,
\begin{equation} \label{eq:fourier-L}
\Lambda_{L=0}(f) = \sum_{r \in \wh{\FF_q^n}} \wh f(a_1 r) \cdots \wh f(a_k r).
\end{equation}
Since $\wh f (0) = \E f$ and $\wh{(1-f)}(r) = -\wh f (r)$ for all $r \ne 0$, we have
\begin{equation} \label{eq:common-fourier}
\Lambda_{L=0}(f) + \Lambda_{L=0}(1-f) = (\E f)^k + (1 - \E f)^k + (1+(-1)^k)\sum_{r \in \wh{\FF_q^n} \setminus \{0\}} \wh f(a_1 r) \cdots \wh f(a_k r) .
\end{equation}

Next, we give the proof of Theorem~\ref{thm:main}. We remark that part (a) of Theorem~\ref{thm:main} has already been proven in \cite{SaWo}.

\begin{proof}[Proof of Theorem~\ref{thm:main}]
(a) Write $m = k/2$ and $L(x_1, x'_1, \dots, x_m, x'_m)  = a_1(x_1-x'_1) + \cdots + a_m(x_m - x'_m)$. For $f \colon \FF_q^n \to [0,1]$, we have
\begin{align*}
\Lambda_{L=0}(f) 
&= \E_z \left( \E_{x_1, \dots, x_m : z = a_1 x_1 + \cdots + a_m x_m} f(x_1) \cdots f(x_m)\right)^2 
\\
&\ge \left(\E_z \E_{x_1, \dots, x_m : z = a_1 x_1 + \cdots + a_m x_m} f(x_1) \cdots f(x_m)\right)^2 
\\
&= \left(\E_{x_1, \dots, x_m} f(x_1) \cdots f(x_m)\right)^2
\\&= (\E f)^{2m}.
\end{align*}
Thus $L=0$ is Sidorenko, which then implies that it must be common, since $\Lambda_{L=0}(f) + \Lambda_{L=0}(1-f) \ge (\E f)^k + (1 - \E f)^k \ge 2^{1-k}$ by the convexity of $t\mapsto t^k$.

\medskip

\noindent (b) It suffices to show that $L=0$ is not common, since as we just noted, every Sidorenko equation is automatically common. It suffices to show that there exists $f \colon \FF_q \to [0,1]$ such that 
\[
\Lambda_{L=0}(f) + \Lambda_{L=0}(1-f) < 2^{1-k}.
\]
In particular, the Sidorenko condition fails for $n=1$ (also see the comments at the beginning of this section).

Since $k$ is even, using \eqref{eq:common-fourier}, it remains to exhibit some $f \colon \FF_q \to [0,1]$ such that $\E f = 1/2$ and such that 
$\sum_{r \in \wh{\FF_q^n} \setminus \{0\}} \wh f(a_1 r) \cdots \wh f(a_k r) < 0$. We shall do it by choosing random values for $\wh f$.

For each $r \in \wh \FF_q \setminus \{0\}$, let $\xi_r$ be a random unit complex number. If $q$ is odd, we choose the $\xi_r$'s subject to $\xi_r = \ol{\xi_{-r}}$ but i.i.d.\ uniform otherwise. If $q$ is even, we choose each $\xi_r \in \{-1,1\}$ uniformly i.i.d.
Construct a random function $f \colon \FF_q \to [0,1]$ by setting $\wh f(0) = 1/2$ and $\wh f(r) = \xi_r/(2q)$ for every $r \in \wh \FF_q \setminus \{0\}$. By the inverse transform $f(x) = \sum_{r \in \wh{\FF_q}} \wh f(r) r(x)$, we see that such $f$ takes real values in $[0,1]$.

Then, for every $r \in \wh \FF_q\setminus\{0\}$, the product $\wh f(a_1 r) \cdots \wh f(a_k r)$ has expectation zero over this random $f$, since $a_1, \dots, a_k$ cannot be partitioned into ``canceling'' pairs each summing to zero. Thus, the expectation of \eqref{eq:common-fourier} over this random $f$ is equal to $2^{1-k}$. 

Observe that for a sufficiently small choice of $\epsilon > 0$, 
if we have $|\xi_r - 1| < \epsilon$ for all $r \in \wh{\FF_q^n} \setminus \{0\}$, then $\Re(\xi_{a_1 r} \cdots \xi_{a_k r}) > \frac{1}{2}$ for all $r\ne 0$. With positive probability, we have that $|\xi_r - 1| < \epsilon$ for all $r \in \wh{\FF_q} \setminus \{0\}$. Under this event, noting that the value of \eqref{eq:common-fourier} is a real number, we have
\[
\sum_{r \in \wh{\FF_q} \setminus \{0\}} \wh f(a_1 r) \cdots \wh f(a_k r)
=
\sum_{r \in \wh{\FF_q} \setminus \{0\}} \frac{\xi_{a_1 r} \cdots \xi_{a_k r}}{(2q)^k}
> \frac{q-1}{2(2q)^k}.
\]
Hence, the value of \eqref{eq:common-fourier} is greater than $2^{1-k}+\frac{q-1}{2(2q)^k}$ with positive probability. Since the expectation of \eqref{eq:common-fourier} is $2^{1-k}$, it follows that there is some $f$ such that the value of $\eqref{eq:common-fourier}$ is strictly less than $2^{1-k}$, as desired.

\medskip

\noindent (c) Since $k$ is odd, by \eqref{eq:common-fourier} and the convexity of $t \mapsto t^k$, 
\[
\Lambda_{L=0}(f) + \Lambda_{L=0}(1-f) = (\E f)^k + (1-\E f)^k \ge 2^{1-k},
\]
and thus $L = 0$ is common.

On the other hand, setting $\wh f(0) = 1/2$ and $\wh f(r) = -1/(2q)$ for all $r \in \wh{\FF_q^n} \setminus \{0\}$, we see from \eqref{eq:fourier-L} that $\Lambda_{L=0}(f) < (\E f)^k$, so that $L =0$ is not Sidorenko. (Alternatively, we can set $A = \FF_q \setminus \{0\}$ and deduce by  the inclusion-exclusion principle that $\Lambda_{L=0}(1_A) = (1 - 1/q)^k + (-1/q)^k(q-1)$.)
\end{proof}

\begin{proof}[Proof of Theorem~\ref{thm:free-var}]
When $a_1, \dots, a_k$ can be partitioned into pairs each summing to zero, the proof that $L' = 0$ is Sidorenko and common follows from \ref{thm:main}(a) since in this case $\Lambda_{L'=0}(f) = (\E f)^\ell \Lambda_{L=0}(f) \ge (\E f)^{k+\ell}$.

Now consider the case when $k$ is odd. It suffices to exhibit a function $f \colon \FF_q \to [0,1]$ such that $\Lambda_{L'=0}(f) + \Lambda_{L'=0}(f) < 2^{1-k-\ell}$. Consider the function $f \colon \FF_q \to [0,1]$ defined by 
\begin{equation} \label{eq:f-free-var-odd}
f(x) = \begin{cases}
\tfrac12 + c - (q-1)\beta & \text{if } x = 0, \\
\tfrac12 + c + \beta & \text{if } x \ne 0.
\end{cases}
\end{equation}
where $\beta = 1/(2q)$ is a constant and $c >0$ is sufficiently small.
Its Fourier transform is given by
\begin{equation} \label{eq:f-hat-free-var-odd}
\wh f (r) = \begin{cases}
\tfrac12 + c & \text{if } r = 0, \\
-\beta & \text{if } r \ne 0,
\end{cases}
\quad \text{and} \quad 
\wh {(1-f)} (r) = \begin{cases}
\tfrac12 - c & \text{if } r = 0, \\
\beta & \text{if } r \ne 0.
\end{cases}
\end{equation}
From \eqref{eq:fourier-L}, we find that
\[
\Lambda_{L=0}(f) = (\tfrac12 + c)^k + (q-1)(-\beta)^k
\]
and
\[
\Lambda_{L=0}(1-f) = (\tfrac12 - c)^k + (q-1)\beta^k.
\]
Since $\Lambda_{L'=0}(f) = (\E f)^\ell \Lambda_{L=0} (f)$ and $\Lambda_{L'=0}(1-f) = (1- \E f)^\ell \Lambda_{L=0} (1-f)$, and recall that $k$ is odd, we have
\begin{align*}
\Lambda_{L'=0}(f) + \Lambda_{L'=0}(1-f) 
&= \paren{\tfrac12 + c}^{\ell}
\paren{\paren{\tfrac12 + c}^k - (q-1)\beta^k}
+ \paren{\tfrac12 - c}^{\ell}\paren{\paren{\tfrac12 - c}^k + (q-1)\beta^k}
\\
&= \paren{\tfrac12 + c}^{k+\ell} + \paren{\tfrac12 - c}^{k+\ell} 
- (q-1) \beta^k \paren{\paren{\tfrac12 + c}^\ell - \paren{\tfrac12 - c}^\ell}
\\
&= 2^{1-k-\ell} -(q-1)\beta^k 2^{-\ell + 1} \ell c + O(c^2),
\end{align*}
which is less than $2^{- k - \ell + 1}$ as long as $c$ is small enough.

Finally, suppose $k$ is even but $a_1,\dots, a_k$ cannot be partitioned into pairs summing to zero. In the proof of Theorem~\ref{thm:main}(b) we constructed an $f \colon \FF_q \to [0,1]$ with $\E f = 1/2$ such that $\Lambda_{L=0}(f) + \Lambda_{L=0}(f) < 2^{1-k}$. Then this $f$ has $\Lambda_{L'=0}(f) = (\E f)^{\ell} \Lambda_{L=0}(f) = 2^{-\ell} \Lambda_{L=0}(f)$ and likewise $\Lambda_{L'=0} (f) = 2^{-\ell} \Lambda_{L=0}(f)$, and hence $\Lambda_{L'=0}(f) + \Lambda_{L'=0}(f) < 2^{1-k- \ell}$, so that $L'$ is not Sidorenko and not common.
\end{proof}

\begin{proof}[Proof of Theorem~\ref{thm:inhom}]
The equation $L'(\bx) = b$ with $b \ne 0$ has no solutions with all $k$ coordinates of $\bx=(x_1,\ldots,x_k)$ lying in a subspace of $\FF_q^n$ not containing $b$. Thus $L'$ cannot be inhomogeneous-Sidorenko.

Let us define
\[
\Lambda_{L = b}(f) := \E_{\bx = (x_1, \dots, x_k)\in (\FF_q^n)^k : L(\bx) = b}[f(x_1) \cdots f(x_k)].
\]
An extension of \eqref{eq:fourier-L} for inhomogeneous equations gives
\begin{equation}\label{eq:Lambda-inhom-fourier}
\Lambda_{L=b}(f) = \sum_{r \in \wh{\FF_q^n}} \wh f(a_1r) \cdots \wh f(a_kr) r(b).
\end{equation}
Similar to \eqref{eq:common-fourier}, we have
\begin{equation}\label{eq:inhom-common-fourier}
\Lambda_{L=b}(f) + \Lambda_{L=b}(1-f) = (\E f)^k + (1 - \E f)^k + (1+(-1)^k)\sum_{r \in \wh{\FF_q^n} \setminus \{0\}} \wh f(a_1 r) \cdots \wh f(a_k r) r(b).
\end{equation}
Thus when $\ell = 0$ and $k$ is odd, then the above expression is at least $(\E f)^k + (1- \E f)^k \ge 2^{1-k}$ by the convexity of $t \mapsto t^k$, so that $L$ is inhomogeneous-common, and $L' = L$ since $\ell = 0$. 

Now we turn to the negative cases. To show that $L'$ is not inhomogeneous-common, it suffices to exhibit a function $f \colon \FF_q \to [0,1]$ such that $\Lambda_{L'=1}(f) + \Lambda_{L'=1}(1-f) < 2^{1-k-\ell}$. 

When $k$ is even, set $\E f = 1/2$ and $\wh f(r) = 1/(2q)$ for all $r \in \wh{\FF_q^n} \setminus \{0\}$, say, so that $f$ takes values in $[0,1]$, and then the value of \eqref{eq:inhom-common-fourier} is less than $2^{1-k}$ since $\sum_{r \in \wh{\FF_q^n} \setminus \{0\}} r(b) = - 1$, so that $\Lambda_{L=1}(f) + \Lambda_{L=1}(1-f) < 2^{1-k}$. Adding $\ell$ free variables in this case gives $\Lambda_{L=1}(f) + \Lambda_{L=1}(1-f) = (\E f)^\ell \Lambda_{L=1}(f) +  (1- \E f)^\ell\Lambda_{L=1}(1-f) < 2^{1-k-\ell}$.

Finally, suppose $\ell > 0$ and $k$ is odd. Take $f$ as in \eqref{eq:f-free-var-odd} except that now we set $\beta = -1/(2q)$. Then, using \eqref{eq:f-hat-free-var-odd} and \eqref{eq:Lambda-inhom-fourier}, and using that $\sum_{r \in \wh{\FF_q^n} \setminus \{0\}} r(1) = -1$, we have
\[
\Lambda_{L=1}(f) = \paren{\tfrac12 + c}^k - (-\beta)^k
\]
and
\[
\Lambda_{L=1}(1-f) = \paren{\tfrac12 - c}^k - \beta^k.
\]
Since $\Lambda_{L'=1}(f) = (\E f)^\ell \Lambda_{L=1}(f)$ and  $\Lambda_{L'=1}(1-f) = (1-\E f)^\ell \Lambda_{L=1}(1-f)$, 
and recall that $k$ is odd, we have
\begin{align*}
\Lambda_{L'=1}(f) + \Lambda_{L'=1}(1-f) 
&= \paren{\tfrac12 + c}^{\ell}
\paren{\paren{\tfrac12 + c}^k - (-\beta)^k}
+ \paren{\tfrac12 - c}^{\ell}\paren{\paren{\tfrac12 - c}^k - \beta^k}
\\
&= \paren{\tfrac12 + c}^{k+\ell} + \paren{\tfrac12 - c}^{k+\ell} 
+ \beta^k \paren{\paren{\tfrac12 + c}^\ell - \paren{\tfrac12 - c}^\ell}
\\
&= 2^{1-k-\ell} + \beta^k 2^{1-\ell} \ell c + O(c^2),
\end{align*} 
which is less $2^{1-k-\ell}$ if $c > 0$ is small enough (recall that $\beta = -1/(2q)$ and $k$ is odd).
\end{proof}

\section{Concluding Remarks}

In this paper, we characterized which linear equations are common and which are Sidorenko. It is natural to try to characterize which {\it systems} of linear equations satisfy these properties, and Saad and Wolf \cite{SaWo} proved several results on this. To be more precise, a system of linear equations ${\bf x}M={\bf 0}$ in $k$ variables is {\it Sidorenko} if the density of solutions to ${\bf x}M={\bf 0}$ in a set $A$ is at least the $k$-th power of the density of $A$.  Here, we think of ${\bf x}$ as a matrix whose columns are $x_1,\dots,x_k$. A system of linear equations ${\bf x}M={\bf 0}$ in $k$ variables is {\it common} if the density of monochromatic solutions to ${\bf x}M={\bf 0}$ in any two-coloring of $\mathbb{F}_q^n$ is at least $2^{1-k}$. We currently don't have a guess for a characterization of these properties for general linear systems. One reason why this is of interest is that it might lead to a better understanding of the analogous properties for graphs and hypergraphs. 

\begin{question} 
Which systems of linear equations are Sidorenko? Which are common? 
\end{question} 

While the linear homogeneous equation $x_1-2x_2+x_3=0$ giving three-term arithmetic progressions is not Sidorenko, Green \cite{G} introduced a weakening of the Sidorenko property which this equation does satisfy. Green proved that, for each $\epsilon>0$ there is $N(\epsilon)$ such that if $G$ is an abelian group with $|G| \geq N(\epsilon)$ and $A \subseteq G$ has density $\alpha$, then there is a nonzero $d \in G$ such that the density of three-term arithmetic progressions with common difference $d$ is at least $\alpha^3-\epsilon$. That is, while the total density of three-term arithmetic progressions can be much less than given by the random bound, there is a nonzero $d$ for which the density of three-term arithmetic progressions with common difference $d$ is at least almost the random bound. 
The proof uses an arithmetic regularity lemma, and consequently gives a tower-type bound on $N(\epsilon)$. The authors later proved that such a tower-type bound is needed (see \cite{FP,FP2,FPZ}). 

It is natural to try to see whether other systems of linear equations satisfy such a popular differences property. Green and Tao \cite{G07,GT} proved that the linear system giving four-term arithmetic progressions has this property, while Ruzsa \cite{BHKR} proved that for any longer length arithmetic progression, the corresponding linear system does not have this property. 

We say that a single linear homogeneous equation $L=0$ is {\it popular} if, for each $\epsilon>0$ there is $n_L(\epsilon)$ such that if $n \geq n_L(\epsilon)$ and $A \subseteq \mathbb{F}_q^n$ has density $\alpha$, then there are nonzero and distinct $d_1,\ldots,d_{k-1}$ such that the density of solutions to $L=0$ with $x_{i+1}-x_1=d_i$ for $i=1,\ldots,k-1$ is at least $\alpha^k - \epsilon$. We refer to $d_1,\dots,d_{k-1}$ as the {\it popular differences}. A linear homogeneous equation $L=0$ is {\it translation-invariant} if and only if the sum of the coefficients is zero. If the equation $L=0$ is popular, then it must be translation-invariant. Indeed, if $L=0$ is not translation-invariant, then the affine subspace $S$ of codimension one (so density $\alpha=1/q$) consisting of those elements whose first coordinate is one is such that $L=0$ has no solution in $S$. Hence, it follows that $n_{L}(\epsilon)$ does not exist for $\epsilon < 1/q^k$. 

For a translation-invariant linear equation, the reader may compare the above definition of popular differences to the discussion of popular difference for three-term arithmetic progressions, where the popular difference is a single parameter $d=x_2-x_1$ instead of the tuple $(d,2d)=(x_2-x_1,x_3-x_1)$. Indeed, in the general case, fixing $d_1,\dots,d_{k-2}$ would determine a unique choice of $d_{k-1}$ for which there exists a solution to $L({\bf x})=0$ with $x_{i+1}-x_1=d_i$ for $i=1,\ldots,k-1$. However, in the case of three-term arithmetic progressions over abelian groups of odd order, fixing $x_2-x_1=d\ne 0$ ensures that $x_3-x_1=2d\ne 0$, whereas fixing nonzero and distinct $(d_1,\dots,d_{k-2})$ in the general case does not guarantee that $x_k-x_1=d_{k-1}$ is nonzero or distinct from $d_1,\dots,d_{k-2}$. Thus, the inclusion of $d_{k-1}$ in the definition of popular differences does not correspond to a degree of freedom, but only serves to ensure that $x_1,\dots,x_k$ are distinct when $x_{i+1}-x_1=d_i$ for $1\le i\le k-1$. 

Note that if $L=0$ is Sidorenko, then simply by averaging, it is also popular and furthermore, $n_L(\epsilon)$ is bounded above by $O(\log(1/\epsilon))$. Green's theorem shows that the equation $x_1-2x_2+x_3=0$ is popular. More generally, Green's argument in \cite{G} can be extended to show that $L=0$ is popular if and only if it is translation-invariant. Indeed, we showed above that if $L=0$ is not translation-invariant, then it is not popular. If $L=0$ is translation-invariant, then the arithmetic regularity lemma proof of Green's theorem goes as follows. For each subset $A \subseteq \mathbb{F}_q^n$ of density $\alpha$, by a Szemer\'edi type lemma, there is a regular subspace $H$ of bounded codimension, and the counting lemma and Jensen's inequality gives that the density of solutions to $L=0$ with $x_1,\ldots,x_k$ all in the same translate of $H$ is at least almost $\alpha^k$. By throwing out the solutions with $x_1,\ldots,x_k$ not all distinct (which is of smaller order for $n$ sufficiently large) and averaging, we get that there exists nonzero and distinct $d_1,\ldots,d_{k-1} \in H$ for which the density of solutions to $L=0$ with $x_{i+1}-x_1=d_i$ for $1 \leq i \leq k-1$ is at least $\alpha^k-\epsilon$. This proof gives an upper bound on $n_L(\epsilon)$ which is a tower of height $\epsilon^{-O(1)}$.

The first two authors \cite{FP} showed, in the case $L=0$ corresponding to three-term arithmetic progressions, $n_L(\epsilon)$ is in fact bounded above and below by a tower of height $\Theta(\log(1/\epsilon))$. We can directly adapt the proof of the upper bound in \cite{FP} to show that for all translation invariant $L=0$, we have $n_L(\epsilon)$ is bounded above by a tower of height $\Theta(\log(1/\epsilon))$, using a density increment argument with the mean $k$-th power density, defined as $b_k(H)=\E[f_H(x)^k]$, where $f_H(x)$ is the average value of $f$ on the affine translate of $H$ containing $x$. 

The lower bound construction in \cite{FP} heavily depends on the fact that the equation $x_1-2x_2+x_3=0$ is not Sidorenko, as a crucial ingredient of our construction is a model function with relatively low density of three-term arithmetic progressions. As mentioned above, if $L=0$ is Sidorenko, then $n_L(\epsilon)$ is not of tower-type, but in fact, only $O(\log(1/\epsilon))$. 

The converse does not hold. Indeed, we next exhibit a linear homogeneous equation in eight variables which is not Sidorenko but $n_L(\epsilon) = O(\log(1/\epsilon))$. For example,  we may take $L=0$ with  
\begin{equation}
    L(x_1,x_2,x_3,x_4,x_5,x_6,x_7,x_8)=-6x_1+3x_2+x_3+7x_4+2x_5-4x_6-2x_7-x_8. \label{eq:eq-L}
\end{equation}
A {\it Hilbert cube of dimension $t$} is a sequence of $2^t$ numbers $x_1+\sum_{i=1}^t \epsilon_id_i$ with $\epsilon_i \in \{0,1\}$. The Hilbert cubes of dimension $t$ are the solution set to a system ${\bf x}M_t={\bf 0}$ of linear equations in $2^t$ variables. It follows from repeated application of the Cauchy--Schwarz inequality that the system ${\bf x}M_t={\bf 0}$ is Sidorenko (see Example 2.5 in \cite{SaWo}). It follows that if $n > C \log(1/\epsilon)$ for a large enough constant $C>0$, and $A \subseteq \mathbb{F}_q^n$, then, letting $t=3$ and averaging, there is a set $\{d_1,d_2,d_3\}$ such that all eight subsets of this set have nonzero and distinct sums, and the density of Hilbert cubes of dimension three in $A$ with this choice of $d_1,d_2,d_3$ is at least $\E[1_A]^8-\epsilon$. Let $d_4=d_1+d_2$, $d_5=d_1+d_3$, $d_6=d_2+d_3$, and $d_7=d_1+d_2+d_3$. The Hilbert cubes $(x_1,x_1+d_1,x_1+d_2,x_1+d_3,x_1+d_1+d_2,x_1+d_1+d_3,x_1+d_2+d_3,x_1+d_1+d_2+d_3)$ of dimension three with differences $d_1,d_2,d_3$ are precisely the solutions to the example $L=0$ in eight variables above which further satisfy $x_{i+1}-x_1=d_i$ for $i=1,\ldots,7$. Thus there are equations $L=0$ like the one above which are not Sidorenko but $n_{L}(\epsilon) = O(\log(1/\epsilon))$.   

We do not know if there is a translation-invariant linear equation $L=0$ with at least four variables for which $n_L(\epsilon)$ has tower-type growth. It would be further interesting to characterize them. For equations $L=0$ with three variables which are translation-invariant, we can show a tight tower-type lower bound on $n_L(\epsilon)$ by a direct generalization of the argument in \cite{FP}.

\begin{question}
Characterize the growth rate of $n_L(\epsilon)$ for linear equations $L=0$ with at least four variables.
\end{question}

Related to the previous question, it would be interesting to know for which linear equations $L=0$ can we prove $n_L(\epsilon) = O(\log(1/\epsilon))$ through the above technique by finding a linear system which is Sidorenko and whose solution set is a subset of the set of solutions to $L=0$. 

\begin{question}
For which linear equation $L=0$ can we find a Sidorenko linear system of equations ${\bf x}M={\bf 0}$ whose solutions are also solutions of $L=0$?  
\end{question}

Note that the proof of the popular differences theorem using the regularity lemma yields the stronger result that there exists a subspace $H$ of bounded codimension such that the density of solutions to $L=0$ with $x_1,\dots,x_k$ all in the same translate of $H$ is at least $\alpha^k - \epsilon$. 
Motivated by this result and the example \eqref{eq:eq-L} showing that $n_L(\epsilon)$ may not have tower-type growth for equations $L$ which are not Sidorenko, we consider a stronger notion of popular differences, replacing differences by subspaces. Call a translation-invariant linear homogeneous equation $L=0$ in $k$ variables {\it subspace popular} if for each $\epsilon>0$ there exists $\tilde{n}_L(\epsilon)$ such that the following holds. If $A \subseteq \mathbb{F}_q^n$ 
has density $\alpha$, then there is a subspace $H$ of $\mathbb{F}_q^n$ of codimension at most $\tilde{n}_L(\epsilon)$ such that the density of solutions in $A$ to $L(x_1, \dots, x_k)=0$ for which $x_1,\ldots,x_k$ lie in the same translate of $H$ is at least $\alpha^k-\epsilon$. The regularity proof shows that if $L=0$ is  translation-invariant, then $\tilde{n}_L(\epsilon)$ exists and is at most tower-type in $\epsilon^{-1}$. By averaging, 
an upper bound on $\tilde{n}_L(\epsilon)$ easily yields an upper bound on $n_L(\epsilon)$. While the example \eqref{eq:eq-L} given above shows that $n_L(\epsilon)$ may not have tower-type growth even for equations $L=0$ which are not Sidorenko, it does not rule out a tower-type bound for $\tilde{n}_L(\epsilon)$. If $L=0$ is translation-invariant but not Sidorenko, must $\tilde{n}_L(\epsilon)$ have tower-type growth?

\begin{question}\label{H} For each translation-invariant $L=0$, what is the growth rate of $\tilde{n}_L(\epsilon)$?  
\end{question}

\smallskip

\noindent \textbf{Acknowledgments.} 
We thank the referees for helpful comments that improved the exposition of the paper.

\end{document}